\begin{document}

\title{The existence of fractional repetition codes}
\author{Toni Ernvall}

\maketitle

\newtheorem{definition}{Definition}[section]
\newtheorem{thm}{Theorem}[section]
\newtheorem{proposition}[thm]{Proposition}
\newtheorem{lemma}[thm]{Lemma}
\newtheorem{corollary}[thm]{Corollary}
\newtheorem{exam}{Example}[section]
\newtheorem{conj}{Conjecture}
\newtheorem{remark}{Remark}[section]

\newcommand{\La}{\mathbf{L}}
\newcommand{\h}{{\mathbf h}}
\newcommand{\Z}{{\mathbf Z}}
\newcommand{\R}{{\mathbf R}}
\newcommand{\C}{{\mathbf C}}
\newcommand{\D}{{\mathcal D}}
\newcommand{\F}{{\mathbf F}}
\newcommand{\HH}{{\mathbf H}}
\newcommand{\OO}{{\mathcal O}}
\newcommand{\G}{{\mathcal G}}
\newcommand{\A}{{\mathcal A}}
\newcommand{\B}{{\mathcal B}}
\newcommand{\I}{{\mathcal I}}
\newcommand{\E}{{\mathcal E}}
\newcommand{\PP}{{\mathcal P}}
\newcommand{\Q}{{\mathbf Q}}
\newcommand{\M}{{\mathcal M}}
\newcommand{\separ}{\,\vert\,}
\newcommand{\abs}[1]{\vert #1 \vert}

\begin{abstract}
Salim El Rouayheb and Kannan Ramchandran introduced the concept of fractional repetition (FR) code. In their article it remained unsolved when we can find such codes. Here we give an exact characterization of situations when it is possible to find an FR code.
\end{abstract}

\section{The existence of FR codes}

The definition of fractional repetition code, introduced in \cite{fr}, is the following.
\begin{definition}
\emph{A Fractional Repetition} (FR) code $\mathcal{C}$, with repetition degree $\rho$ for an $(n,k,d)$ DSS, is a collection $\mathcal{C}$ of $n$ subsets $V_1, V_2, \dots ,V_n$ of a set $\Omega=\{1,\dots,\theta\}$ and of cardinality $d$ each, satisfying the condition that each element of $\Omega$ belongs to exactly $\rho$ sets in the collection.
\end{definition}

It is easy to see that we have to have
$$
\theta \rho = nd
$$
and
$$
n \leq \binom{\theta}{d}.
$$

These conditions are also sufficient:

\begin{thm}
Let $n,d,\theta,\rho$ be positive integers and let also the two conditions above hold. Then we have an FR code $\mathcal{C}$, with repetition degree $\rho$ for an $(n,k,d)$ DSS.
\end{thm}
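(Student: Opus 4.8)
The plan is to encode an FR code as a $0$--$1$ matrix $M\in\{0,1\}^{n\times\theta}$ whose $i$-th row is the indicator vector of $V_i$: the requirements become that every row sum equals $d$, every column sum equals $\rho$, and the rows are pairwise distinct. I will build such an $M$ in two stages, first ignoring the distinctness of the rows and then restoring it.

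\emph{Stage 1: a $\rho$-regular $d$-uniform collection with repetitions allowed.} Since $\binom{\theta}{d}\ge n\ge 1$ we have $d\le\theta$, hence $\rho=nd/\theta\le n$. Enumerate the $nd=\theta\rho$ cells of an $n\times d$ array in column-major order, so that cell $(i,j)$ gets the number $i+(j-1)n$, and place the value $\lceil p/\rho\rceil\in\{1,\dots,\theta\}$ into the cell numbered $p$. Then each value occupies exactly one block of $\rho$ consecutive cells, so it is used exactly $\rho$ times in total; and the $d$ cells of a fixed row carry the numbers $i,\,i+n,\,\dots,\,i+(d-1)n$, which are pairwise at distance a positive multiple of $n\ge\rho$ and therefore lie in pairwise different length-$\rho$ blocks, so the $d$ values in a row are distinct. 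Reading off the rows produces $d$-subsets $V_1,\dots,V_n$ of $\{1,\dots,\theta\}$ covering each point exactly $\rho$ times --- an FR code except that some $V_i$ may coincide.

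\emph{Stage 2: separating the repeated sets.} Stage 1 shows the set of all $\rho$-regular $d$-uniform collections $(V_1,\dots,V_n)$ (coincidences allowed) is nonempty, so I will choose one, $\mathcal C$, for which the number $D$ of distinct sets among $V_1,\dots,V_n$ is maximal, and show $D=n$. If not, then $\mathcal C$ has at most $n-1<\binom{\theta}{d}$ distinct sets, so some $d$-set $W^{\ast}$ equals no $V_i$, while some set $W$ occurs at two indices $i\ne j$. The basic move is the \emph{$2$-swap}: if $a\in V_k\setminus V_{k'}$ and $b\in V_{k'}\setminus V_k$ for indices $k\ne k'$, replacing $V_k$ by $(V_k\setminus\{a\})\cup\{b\}$ and $V_{k'}$ by $(V_{k'}\setminus\{b\})\cup\{a\}$ changes no row sum and no column sum. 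Keeping $V_j$ fixed, I will apply a chain of $2$-swaps that carries $V_i$ along a shortest symmetric-difference path to $W^{\ast}$, each swap deleting from $V_i$ an element of $V_i\setminus W^{\ast}$ and inserting an element of $W^{\ast}\setminus V_i$; at every step such a swap is available, because the element to be inserted lies in some $V_{k'}$ not containing the element to be deleted --- otherwise those two elements would lie in exactly the same multiset of sets, impossible since $V_j=W$ contains the one and not the other. At the end $W^{\ast}$ is present, $W$ is still present at $j$, and the only other sets that changed are the few that absorbed deleted elements.

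The step I expect to be the main obstacle is showing that the swaps can be arranged so that these absorbing sets are not pushed onto sets already in $\mathcal C$; once that is arranged, $D$ strictly increases, contradicting maximality, and iterating the procedure clears all repetitions and finishes the proof. It is exactly here that $n\le\binom{\theta}{d}$ is used --- to supply an unused $d$-set and, after choosing $W^{\ast}$ at minimal distance from $W$, to leave enough freedom in routing the chain. (Alternatively, Baranyai's theorem on partitioning the complete $d$-uniform hypergraph on $\theta$ vertices into near-regular classes settles the theorem at once: in the partition $\binom{\theta}{d}=n+(\binom{\theta}{d}-n)$ the class of size $n$ is exactly $\rho$-regular, since $nd/\theta=\rho$ is an integer, and its members are automatically distinct.)
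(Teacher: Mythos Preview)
Your Stage~1 is correct: the column-major filling with $\lceil p/\rho\rceil$ does produce $n$ (not necessarily distinct) $d$-subsets covering every point exactly $\rho$ times, and the key inequality $n\ge\rho$ indeed follows from $d\le\theta$.

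Stage~2, however, is not a proof, and you say so yourself. The existence of a swap partner $V_{k'}$ at every step is fine --- your observation that throughout the process the element $a$ to be deleted lies in $W\setminus W^{\ast}$ and the element $b$ to be inserted lies in $W^{\ast}\setminus W$, so that $V_j=W$ always separates them, is correct and forces $k'\notin\{i,j\}$. The gap is exactly the one you name: each swap also replaces $V_{k'}$ by $(V_{k'}\setminus\{b\})\cup\{a\}$, and nothing prevents this new set from coinciding with some other $V_m$. A single repair of the $(i,j)$ collision can thus create several fresh collisions among the absorbing sets, so the number $D$ of distinct sets need not increase and the extremal choice of $\mathcal C$ yields no contradiction. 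Choosing $W^{\ast}$ at minimal symmetric-difference distance from $W$ limits the number of swaps but does not by itself rule this out, and ``enough freedom in routing the chain'' is not an argument. I do not see a short patch; making a swap-based proof go through here is essentially as hard as reproving (a special case of) the theorem you invoke next.

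Your parenthetical appeal to Baranyai's theorem, by contrast, is a complete and correct proof: the general form (for any decomposition $\binom{\theta}{d}=a_1+\cdots+a_t$ the $d$-subsets of $[\theta]$ can be partitioned into almost-regular classes of sizes $a_1,\dots,a_t$) with $t=2$ and $a_1=n$ gives $n$ pairwise distinct $d$-subsets covering each point exactly $nd/\theta=\rho$ times. This is a genuinely different route from the paper's. The paper gives an explicit, elementary construction via cyclic shifts modulo $\theta$: setting $\omega=\theta/\gcd(\theta,d)$, one checks that $\omega\mid n$ and that $\omega$ divides the size of every cyclic-shift class in $\binom{[\theta]}{d}$; the code is then assembled by taking whole shift classes until at most $\theta$ sets remain to be chosen, and filling the remainder from a fixed partition of the class $[\{1,\dots,d\}]$ into $\theta/\omega$ blocks of size $\omega$. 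The paper's argument is self-contained and yields the code by a simple greedy procedure; your Baranyai route is a one-line deduction but imports a substantially deeper theorem and is correspondingly less constructive.
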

\begin{proof}
If $\theta=d$ then the claim is clear so assume that $\theta > d$.

Let $\omega$ be the smallest positive integer with property $d\omega \equiv 0 (\mod \theta)$. Write also $d\omega = \theta a$. Now we have $and=a\theta \rho= d\omega \rho$ and hence $\omega$ divides $an$. Since $\omega$ and $a$ are relatively prime we have that $\omega$ divides $n$.

Let $A,B \subseteq \Omega$. We say that $A$ is a cyclic shift of $B$ if we have $A=B+j$ for some integer $j$ where $B+j$ means that we add $j$ to each element of $B$ and then reduce the result modulo $\theta$. If $A$ is a cyclic shift of $B$ we write $A\sim B$. It is clear that $\sim$ is an equivalence relation. Denote the equivalence class of $A$ by $[A]$.

It is easy to check that
$$
[A] = \{ A, A+1, A+2, \dots, A+(\#[A]-1) \}
$$
and hence
$$
A = A + \#[A]
$$
for all sets $A \subseteq \Omega$. And since $A = A + \theta$ we have that $\#[A]$ divides $\theta$.

Define $\mathcal{D} = \{ A \subseteq \Omega | \#A=d \}$. Let $l \in \Omega$ and $A \in \mathcal{D}$. We have that $l$ belongs exactly to $d$ sets of sets $A , A+1 , \dots , A+(\theta-1)$. Here we might have that some of these sets are same and then we have calculated $l$ belonging to them that many times. This gives that $l$ belongs exactly to $\frac{d \#[A]}{\theta}$ of sets $[A]=\{ A, A+1, A+2, \dots, A+(\#[A]-1) \}$. Hence $\theta$ divides $d \#[A]$ and by the minimality of $\omega$ we know that $\omega$ divides $\#[A]$ for all sets $A \in \mathcal{D}$.

Define $S= \{ 1, 2, \dots, d \}$ and $\mathcal{T} = \{ S , S+d , S+2d , \dots , S + (\omega - 1)d \}$. It is clear that $\#[S]=\theta$, $\# \mathcal{T} = \omega$, and that each $l \in \Omega$ belongs exactly to $\frac{d \omega}{\theta}=a$ of sets in $\mathcal{T}$. Note that $\mathcal{T}+j$ has similar properties than $\mathcal{T}$ for all $j \in \Z$, and that
$$
[S] = \bigcup_{j \in \Z} (\mathcal{T}+j).
$$
Hence for some index set $\mathcal{I}$ with $\# \mathcal{I}=\frac{\theta}{\omega}$ we have that $\{ \mathcal{T} + j | j \in \mathcal{I} \}$ is a partition of $[S]$.

Now we are ready to construct the code $\mathcal{C}$. Write $n_0=n$.

First if $n_0 > \theta$ take any $A_{1} \in \mathcal{D}\setminus[S]$ and add $[A_{1}]$ into $\mathcal{C}$ (which is, at first, an empty set). Put $n_1=n_0 - \#[A_{1}]$.

Then if $n_1 > \theta$ take any $A_{2} \in \mathcal{D}\setminus([S]\cup[A_1])$ and add $[A_2]$ into $\mathcal{C}$. Put $n_2=n_1 - \#[A_2]$.

Continue this process as long as $n_j > \theta$. This is possible since we have originally $\binom{\theta}{d}-\#[S] \geq n-\theta$ sets available by our assumption. When we have $n_j \leq \theta$ we either have $n_j=0$ or $n_j > 0$ because $\#[A] \leq \theta$ for all sets $A \in \mathcal{D}$. If we have $n_j=0$ then we have $n$ sets in $\mathcal{C}$. If we have $n_j>0$ then we still know that $\omega$ divides $n_j$ since $\omega$ divides $n$ and $\omega$ divides $\#[A]$ for all sets $A \in \mathcal{D}$. Now $n_j=\omega e$ for some $0<e\leq\frac{\theta}{\omega}$. Take any $e$ of sets $\{ \mathcal{T} + j | j \in \mathcal{I} \}$ and add them into $\mathcal{C}$. Now we have $n$ sets in $\mathcal{C}$.

This $\mathcal{C}$ is a code with properties we wanted because its cardinality is $n$, all sets in it have cardinality $d$, and because each $l_1 \in \Omega$ belongs to as many of sets in $\mathcal{C}$ as any $l_2 \in \Omega$, we have that each $l \in \Omega$ belongs exactly to $\frac{nd}{\theta}=\rho$ of sets in $\mathcal{C}$.

\end{proof}

\begin{exam}
Let $\theta=8$, $d=6$, $n=12$, and $\rho=9$.

An algorithm to find a suitable FR code $\mathcal{C}$ when using the notation as in the proof of the theorem:

We have $n_0=12$, $\omega=4$, $S=\{ 1,2,3,4,5,6 \}$, and $\mathcal{T}= \{ S,S+6,S+4,S+2 \}$. Let $A_1= \{1,2,3,4,6,7\}$. Then $\#[A_1]=8$ and hence $n_1=12-8=4 \leq \theta$. So we find a suitable code by taking $\mathcal{C}=[A_1]\cup \mathcal{T}$.
\end{exam}

\begin{exam}
Let $\theta=7$, $d=3$, $n=21$, and $\rho=9$.

An algorithm to find a suitable FR code $\mathcal{C}$ when using the notation as in the proof of the theorem:

We have $n_0=21$, $\omega=7$, and $S=\{ 1,2,3 \}$. Let $A_1= \{ 1,2,4 \}$. Then $\#[A_1]=7$ and hence $n_1=21-7=14 > \theta$.

Let $A_2= \{ 1,3,5 \}$. Then $\#[A_2]=7$ and hence $n_2=14-7=7 = \theta$.

So we find a suitable code by taking $\mathcal{C}=[A_1]\cup [A_2]\cup [S]$.
\end{exam}


\end{document}